\definecolor{vegasgold}{rgb}{0.77, 0.7, 0.35}
\definecolor{darkgoldenrod}{rgb}{0.72, 0.53, 0.04}
\definecolor{gold(metallic)}{rgb}{0.83, 0.69, 0.22}
\DeclareFontFamily{U}{wncy}{}
\DeclareFontShape{U}{wncy}{m}{n}{<->wncyr10}{}
\DeclareSymbolFont{mcy}{U}{wncy}{m}{n}
\DeclareMathSymbol{\Sh}{\mathord}{mcy}{"58}
\newtheorem{theorem}{Theorem}[section]
\newtheorem{lemma}[theorem]{Lemma}
\newtheorem*{theorem*}{Theorem}
\newtheorem*{ass*}{Assumption}
\newtheorem{corollary}[theorem]{Corollary}
\newtheorem{question}[theorem]{Question}
\newcommand{\Z}{\mathbb{Z}}
\newcommand{\Q}{\mathbb{Q}}
\newcommand{\F}{\mathbb{F}}
\newcommand{\op}[1]{\operatorname{#1}}
\numberwithin{equation}{section}
\begin{document}

\title[sums of cubes in the $\Z_p$-extension]{Integers that are sums of two cubes in the cyclotomic $\Z_p$-extension}

\author[A.~Ray]{Anwesh Ray}
\thanks{Author's Orcid ID: 0000-0001-6946-1559}
\address[Ray]{Chennai Mathematical Institute, H1, SIPCOT IT Park, Kelambakkam, Siruseri, Tamil Nadu 603103, India}
\email{anwesh@cmi.ac.in}

\keywords{ranks of elliptic curves, diophantine applications of Iwasawa theory,  representing a number as a sum of two cubes}
\subjclass[2020]{Primary 11R23, Secondary 11G05, 11D25}

\maketitle

\begin{abstract}
 Let $n$ be a cubefree natural number and $p\geq 5$ be a prime number. Assume that $n$ is not expressible as a sum of the form $x^3+y^3$, where $x,y\in \mathbb{Q}$.  In this note,  we study the solutions (or lack thereof) to the equation $n=x^3+y^3$,  where $x$ and $y$ belong to the cyclotomic $\mathbb{Z}_p$-extension of $\mathbb{Q}$. As an application, consider the case when $n$ is not a sum of rational cubes. Then, we prove that $n$ cannot be a sum of two cubes in certain large families of prime cyclic extensions of $\mathbb{Q}$.
\end{abstract}

\section{Introduction}

\par Given a natural number $n$,  and $k\in \Z_{\geq 2}$, it is of natural interest to investigate if $n$ may be represented as a sum of two $k$-th powers
\[n=x^k+y^k,\] where $x$ and $y$ are rational numbers. The problem has been completely solved by Euler when $k=2$,  and in this case,  it is easy to see that $n$ is a sum of two rational squares if and only if it is a sum of two integers squares. It turns out that $n$ is a sum of two integer squares if all prime factors $\ell$ of $n$ such that $\ell\equiv 1\mod{4}$ have even exponent in the factorization of $n$. It follows from this description that the natural density of numbers that can be represented as a sum of two rational squares is $0$. On the other hand,  it is possible for $n$ to be a sum of two rational cubes without being a sum of two integer cubes. The smallest example is 
\[6=\left(\frac{17}{21}\right)^3+\left(\frac{37}{21}\right)^3. \] It is expected that the density of natural numbers $n$ that can be represented as a sum of two rational cubes is $\frac{1}{2}$, cf. \cite{alpoge2022integers}.  There has been much work done on the problem of determining when $n$ is a sum of rational cubes and the problem goes back to work of Sylvester \cite{Sylvester}, Selmer \cite{Selmer}, Satg\'e \cite{satge} and Lieman \cite{Lieman}.  More recently, Alp\"oge, Bhargava and Schnidman \cite{alpoge2022integers} showed that the lower density of natural numbers $n$ that are (resp. are not) a sum of two rational cubes is $\geq \frac{2}{21}$ (resp $\geq \frac{1}{6}$).  

\par In this article we consider a related question. Suppose $n$ is \emph{not} expressible as a sum of two rational cubes and $L$ is an algebraic extension of $\Q$. We seek conditions for $n$ to not equal $x^3+y^3$, for all $x,y\in L$. The extensions $L$ we shall consider are infinite cyclotomic extensions that have come to prominence thanks to celebrated work of Iwasawa, cf. \cite{iwasawa} and \cite[Ch. 13]{washington1997introduction}. Fix an algebraic closure $\bar{\Q}$ of $\Q$. Let $p\geq 5$ be a prime number and $\Q(\mu_{p^\infty})\subset \bar{\Q}$ be the infinite cyclotomic extension of $\Q$ generated by all $p$-power roots of unity.  Via the cyclotomic character,  the Galois group $\op{Gal}(\Q(\mu_{p^\infty})/\Q)$ is isomorphic to $\Z_p^\times$,  the units in $\Z_p$. This group decomposes into a product $\Delta\times U$,  where $\Delta\simeq (\Z/p\Z)^\times$ and $U=1+p\Z_p$ is the subgroup of principal units in $\Z_p^\times$.  The $p$-adic logarithm gives an isomorphism of $U$ with the additive group $\Z_p$.  Let $\Q_\infty^{(p)}$ be the subfield of $\Q(\mu_{p^\infty})$ that is fixed by $\Delta$.  By construction,  $\Q_\infty^{(p)}$ is a Galois extension of $\Q$ with Galois group isomorphic to $\Z_p$.  This extension is referred to as the cyclotomic $\Z_p$-extension of $\Q$. 

\par Being a pro-$p$ abelian extension of $\Q$,  the cyclotomic $\Z_p$-extension satisfies some nice properties.  All primes are finitely decomposed in $\Q_\infty^{(p)}$ and the only prime that ramifies is $p$.  It is the unique $\Z_p$-extension of $\Q$.  Let $E_n$ denote the elliptic curve defined by $x^3+y^3=n z^3$.  This elliptic curve is seen to have no nontrivial rational torsion points. Therefore $n$ is a sum of two rational cubes if and only if the rank of $E_n(\Q)$ is positive.  The growth of ranks of elliptic curves in $\Z_p$-extensions of number fields was studied by Mazur, cf. \cite{mazur1972rational}.  Kato \cite{kato2004p} and Rohrlich \cite{rohrlich} proved that given an elliptic curve $E_{/\Q}$,  the rank of $E(\Q_n^{(p)})$ is bounded as $n$ goes to $\infty$.  Moreover the upper bound can be related to the algebraic structure of Selmer groups defined over $\Q_\infty^{(p)}$.  The conjectural properties of these Selmer groups were described by Mazur,  and later Rubin \cite{rubinbsd} and Kato \cite{kato2004p} showed that they are cotorsion as modules over the Iwasawa algebra. The structure of such groups are closely related to analytic objects known as $p$-adic L-functions.  

\par In this article,  we employ techniques from the Iwasawa theory of Selmer groups to study the following question.
\begin{question}
Let $n$ be a cubefree natural number which is not the sum of two rational cubes.  Let $p\geq 5$ be a prime at which the elliptic curve 
\[E_n: x^3+y^3=n z^3\] has good ordinary reduction. Then,  can $n$ be a sum of the form $x^3+y^3$, where $x,y\in \Q_\infty^{(p)}$. 
\end{question}

\begin{theorem}\label{main thm}
    Let $n$ be a cubefree natural number which is not the sum of two cubes in $\Q$. Assume that the Tate-Shafarevich group $\Sh(E_n/\Q)$ is finite.  Let $S$ be the set of prime numbers $p\geq 5$ such that 
\begin{enumerate}[label=(\alph*)]
\item $E_n$ has good ordinary reduction at $p$,
\item $\Sh(E_n/\Q)[p]=0$,
\item $p$ does not divide $|\widetilde{E}_n(\F_p)|$, where $\widetilde{E}_n$ is the reduction of $E_n$ modulo $p$. 
\end{enumerate}Then the following assertions hold
\begin{enumerate}
    \item $S$ has density $ \frac{1}{2}$.
    \item For any prime $p\in S$,  $n$ is not a sum of two cubes in $\Q_\infty^{(p)}$.
\end{enumerate}
\end{theorem}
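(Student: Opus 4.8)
The plan is to show that $E_n(\Q_\infty^{(p)})$ reduces to the single point at infinity; by an elementary argument this is equivalent to (2). Statement (1) is a separate density computation exploiting the complex multiplication on $E_n$. Throughout write $\Gamma=\op{Gal}(\Q_\infty^{(p)}/\Q)\cong\Z_p$ and $\Lambda=\Z_p\lb\Gamma\rb$.

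For (2), I would first note that $E_n(\Q)=0$: it has no nonzero rational torsion, and its rank is $0$ since $n$ is not a sum of two rational cubes. Hence $E_n[p]^{G_\Q}=0$, and as $\Q_\infty^{(p)}/\Q$ is pro-$p$ this forces $E_n[p^\infty](\Q_\infty^{(p)})=0$. Condition (b) (with finiteness of $\Sh(E_n/\Q)$) gives $\Sh(E_n/\Q)[p^\infty]=0$, so the descent exact sequence yields $\Sel_{p^\infty}(E_n/\Q)=0$. I would then apply Greenberg's refinement of Mazur's control theorem, which is available because $E_n$ has good ordinary reduction at $p$ (condition (a)): the restriction map $\Sel_{p^\infty}(E_n/\Q)\to\Sel_{p^\infty}(E_n/\Q_\infty^{(p)})^{\Gamma}$ has kernel contained in $H^1(\Gamma,E_n[p^\infty](\Q_\infty^{(p)}))=0$, and cokernel embedding into a finite sum of local error terms. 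The term at $p$ is governed by $\widetilde{E}_n(\F_p)[p^\infty]$---one uses here that $p$ is totally ramified in $\Q_\infty^{(p)}/\Q$, so its decomposition group acts on the reduction through the same unramified quotient as $G_{\Q_p}$---and this group vanishes by condition (c). The term at a bad prime $\ell\neq p$ is a subquotient of the $p$-part of a Tamagawa number of $E_n$; since $j(E_n)=0$ the curve has potentially good, hence additive, reduction at each bad prime, so every Tamagawa number is at most $4<p$ and these terms vanish as well. Thus $\Sel_{p^\infty}(E_n/\Q_\infty^{(p)})^{\Gamma}=0$, and since its Pontryagin dual is a finitely generated $\Lambda$-module, Nakayama's lemma forces $\Sel_{p^\infty}(E_n/\Q_\infty^{(p)})=0$.

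To pass from the Selmer group to rational points, restricting to the $k$-th layer (with vanishing kernel, as above) gives $\Sel_{p^\infty}(E_n/\Q_k^{(p)})=0$ for every $k$, so each $E_n(\Q_k^{(p)})$ has rank $0$; hence $E_n(\Q_\infty^{(p)})$ is a torsion group, and it has trivial $p$-primary part since $E_n[p^\infty](\Q_\infty^{(p)})=0$. Any point of order $m$ coprime to $p$ generates a subfield of $\Q_\infty^{(p)}\cap\Q(E_n[m])$, which is unramified at $p$ because $E_n$ has good reduction there, and is therefore equal to $\Q$ since every nontrivial layer of $\Q_\infty^{(p)}/\Q$ is totally ramified at $p$; so that point lies in $E_n(\Q)=0$. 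Hence $E_n(\Q_\infty^{(p)})=\{O\}$. Finally, because $p\neq3$ the field $\Q_\infty^{(p)}$ contains no primitive cube root of unity, so $O$ is the only point at infinity on $E_n$ over $\Q_\infty^{(p)}$; a representation $n=x^3+y^3$ with $x,y\in\Q_\infty^{(p)}$ would yield a point of $E_n(\Q_\infty^{(p)})$ distinct from $O$, which is impossible.

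For (1): $E_n$ is a cubic twist of $y^2=x^3-432$, so it has $j$-invariant $0$ and complex multiplication by $\Z[\zeta_3]$ over $\Q(\zeta_3)$; for $p\geq5$ with $p\nmid n$ it has good reduction at $p$, which is ordinary precisely when $p$ splits in $\Q(\zeta_3)$, i.e.\ $p\equiv1\pmod3$---a set of density $\tfrac12$ by Dirichlet. Condition (b) removes only the finitely many $p$ dividing $|\Sh(E_n/\Q)|$, and bad reduction removes only $p\mid6n$. Condition (c) fails exactly when $a_p\equiv1\pmod p$, which for $p\geq5$ forces $a_p=1$ by the Hasse bound; for a CM elliptic curve, writing $4p=L^2+27M^2$ with $a_p=\pm L$, this requires $L=\pm1$, which holds for only $O(\sqrt{X})$ primes $p\leq X$ and hence for a set of density $0$. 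Therefore $S$ has density $\tfrac12$. I expect the main obstacle to be the local analysis inside the control theorem---pinning down precisely that condition (c) annihilates the contribution at $p$ and that $p\geq5$ annihilates those at the additive bad primes; the remaining ingredients are either formal (inflation--restriction, Nakayama) or standard (Mazur--Greenberg control, Deuring's criterion, sparsity of anomalous CM primes).
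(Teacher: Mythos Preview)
Your argument is correct and follows essentially the same route as the paper: vanish $\Sel_{p^\infty}(E_n/\Q)$ from the hypotheses, push this up to $\Q_\infty^{(p)}$ via Greenberg's control theorem, and then read off rank~$0$ and trivial torsion using the pro-$p$/ramification dichotomy. The only differences are cosmetic: where the paper simply cites Greenberg's \cite[Proposition~3.8]{greenbergIwasawa97} and observes that $E_n$ has no multiplicative primes, you unpack the local error terms directly (condition~(c) at $p$, Tamagawa numbers $\leq 4$ at additive primes); and for the density you argue explicitly via Deuring and the Hasse bound rather than citing \cite{vkmurty}. One small slip: for $p=5$ the Hasse bound does not quite force $a_p=1$ (e.g.\ $a_5=-4$ is allowed), but this affects at most one prime and is irrelevant to the density.
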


The assumption that $p\geq 5$ in Theorem \ref{main thm} is necessary since it is assumed that $E_n$ has good reduction at $p$. On the other hand, the curves $E_n$ have bad reduction at $2,3$. Our methods rely on the aforementioned results of Rubin and Kato, which is only valid for elliptic curves with good ordinary reduction at $p$.  Since the elliptic curves in question have complex multiplication,  the density of primes of good ordinary (resp. supersingular) reduction is $\frac{1}{2}$ (resp. $\frac{1}{2}$).  In effect, we show that the set of primes for which our result holds is of density $1$ in the set of primes of good ordinary reduction.  On the other hand, if $p$ is a prime of good ordinary reduction such that $\op{Sel}_{p^\infty}(E_n/\Q_\infty^{(p)})\neq 0$,  then this does not necessarily mean that $E_n(\Q_\infty^{(p)})\neq 0$,  since the contribution could as well arise from growth in the $p$-primary part of the Tate-Shafarevich group. Using Theorem \ref{main thm} we obtain the following unexpected result on the non-solvability of the equation $n=x^3+y^3$ in certain prime cyclic extensions of $\Q$. 

\begin{theorem}\label{aux thm}
    Let $n$ be a cube-free natural number which is not a sum of cubes in $\Q$ and let $p>7$ be a prime number in the set $S$ described in Theorem \ref{main thm}. Let $L/\Q$ be a Galois extension with Galois group $G:=\op{Gal}(L/\Q)$ and let $\Sigma$ be the primes $\ell$ that ramify in $L$. Assume that 
    \begin{enumerate}
        \item $G\simeq \Z/p\Z$, 
        \item the primes $\ell\in \Sigma$ do not divide $6pn$,
        \item for primes $\ell\in \Sigma$, we assume that $\widetilde{E}_n(\F_\ell)[p]=0$.
    \end{enumerate}
    Then, $n$ \emph{cannot} be represented as $x^3+y^3$ for $x,y\in L$.
\end{theorem}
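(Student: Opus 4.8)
The plan is to rephrase the statement in terms of Mordell--Weil groups and then to compare $p$-Selmer groups over $\Q$ and over $L$. Since $E_n$ has trivial rational torsion and $[L:\Q]=p$ is an odd prime coprime to $6$, the only $L$-rational point of $E_n$ at infinity is the origin $O$ --- the other two lie over $\Q(\sqrt{-3})\not\subseteq L$ --- so every point of $E_n(L)\setminus\{O\}$ is affine, and $n=x^3+y^3$ is solvable in $L$ if and only if $E_n(L)\neq\{O\}$. I will show $E_n(L)=\{O\}$. It suffices to prove $\Sel_p(E_n/L)=0$: then $E_n(L)/pE_n(L)=0$, so $E_n(L)$ has rank $0$ and no $p$-torsion, and using that $E_n$ has complex multiplication by $\Z[\omega]$ --- so that for $p\geq 5$ the Galois orbit of any nonzero point of $E_n[q]$ ($q$ prime) has length prime to $p$ unless $q\equiv\pm1\bmod p$, a case excluded here for $p>7$ by the constraints on $\Sigma$ (or by the known classification of torsion growth in extensions of prime degree) --- one gets $E_n(L)_{\mathrm{tors}}=0$ and hence $E_n(L)=\{O\}$.

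For $\Sel_p(E_n/L)=0$: since $n$ is not a sum of two rational cubes, $E_n(\Q)$ has rank $0$; with $E_n(\Q)_{\mathrm{tors}}=0$ and $\Sh(E_n/\Q)[p]=0$ (hypothesis (b) of Theorem~\ref{main thm}), the exact sequence $0\to E_n(\Q)/pE_n(\Q)\to\Sel_p(E_n/\Q)\to\Sh(E_n/\Q)[p]\to 0$ gives $\Sel_p(E_n/\Q)=0$. Now $G:=\op{Gal}(L/\Q)\simeq\Z/p\Z$ acts on $\Sel_p(E_n/L)$, making it a module over the local Artinian ring $\F_p[G]\cong\F_p[s]/(s^p)$; hence $\Sel_p(E_n/L)\neq0$ would force $\Sel_p(E_n/L)^G\neq0$, so it is enough to prove that restriction gives an isomorphism $\Sel_p(E_n/\Q)\cong\Sel_p(E_n/L)^G$. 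The argument above also shows $E_n(L)[p]=0$, so inflation--restriction yields an isomorphism $H^1(G_\Q,E_n[p])\cong H^1(G_L,E_n[p])^G$; thus I only need to check, place by place, that a class $c\in H^1(G_\Q,E_n[p])$ meets the Selmer conditions over $\Q$ exactly when $\mathrm{res}_L(c)$ meets them over $L$.

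The place-by-place comparison is the heart of the proof. At a prime $q$ unramified in $L$ with $q\neq p$ and good reduction, the condition is the unramified subgroup, and since inertia at $q$ is unchanged in $L$ this transfers in both directions automatically. At $q=p$: by hypothesis (2), $p\notin\Sigma$, so $L/\Q$ is unramified at $p$; since $E_n$ has good ordinary reduction at $p$ and, by hypothesis (c), $p\nmid|\widetilde{E}_n(\F_p)|$, the mod-$p$ local condition at $p$ is cut out purely by the action of inertia on the unramified quotient of $E_n[p]$, and this again transfers because inertia at $p$ is unchanged in the unramified extension $L_{\mathfrak{w}}/\Q_p$. The delicate case is $q\in\Sigma$. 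Then $q\nmid 6pn$ by hypothesis (2), so $E_n$ has good reduction at $q$ and $E_n[p]$ is unramified as a $G_{\Q_q}$-module, while $L_{\fv}/\Q_q$ is \emph{totally} ramified of degree $p$ with residue field still $\F_q$. Hence both local conditions are $H^1_{\mathrm{ur}}\cong\widetilde{E}_n(\F_q)[p]$, which vanishes by hypothesis (3); moreover the kernel of $H^1(\Q_q,E_n[p])\to H^1(L_{\fv},E_n[p])$ is $H^1(\Z/p\Z,\,E_n[p]^{G_{L_{\fv}}})=H^1(\Z/p\Z,\widetilde{E}_n(\F_q)[p])=0$, so this restriction is injective and therefore $\mathrm{res}_q(c)=0$ whenever $\mathrm{res}_L(c)\in\Sel_p(E_n/L)$, which is precisely the required condition over $\Q_q$. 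This settles the comparison, giving $\Sel_p(E_n/\Q)\cong\Sel_p(E_n/L)^G=0$, hence $\Sel_p(E_n/L)=0$.

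I expect the main obstacle to be exactly this local analysis at the ramified primes $\ell\in\Sigma$: one must guarantee that the behaviour of $L/\Q$ at $\Sigma$ does not enlarge the Selmer group, and this is where the hypothesis $\widetilde{E}_n(\F_\ell)[p]=0$ is indispensable --- it is the analogue of the "Kida-type" term in an Euler characteristic computation, ensuring the relevant local cohomology vanishes. In spirit this is the same Euler characteristic comparison that proves Theorem~\ref{main thm}, now carried out over the cyclic degree-$p$ field $L$ rather than up the cyclotomic $\Z_p$-tower; one could equally deduce it from the vanishing of $\op{Sel}_{p^\infty}(E_n/\Q_\infty^{(p)})$ obtained there. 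The only other point needing attention is the triviality of $E_n(L)_{\mathrm{tors}}$, which is precisely why the hypothesis is $p>7$ rather than merely $p\geq5$.
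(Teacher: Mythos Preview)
Your argument is correct in outline and takes a genuinely different route from the paper's. The paper works over the cyclotomic tower: from Theorem~\ref{main thm} one has $\op{Sel}_{p^\infty}(E_n/\Q_\infty^{(p)})=0$, and the Kida--type formula of Hachimori--Matsuno is then invoked to show that the $\mu$- and $\lambda$-invariants of $\op{Sel}_{p^\infty}(E_n/L_\infty^{(p)})$ vanish as well, whence $\op{rank}E_n(L)=0$; the torsion is handled by citing the classification of torsion growth in prime-degree extensions (Gonz\'alez-Jim\'enez--Najman). You instead stay at finite level and compare $\Sel_p(E_n/\Q)$ with $\Sel_p(E_n/L)^G$ directly by inflation--restriction and a prime-by-prime check of local conditions, then use that $\F_p[G]$ is local to pass from $\Sel_p(E_n/L)^G=0$ to $\Sel_p(E_n/L)=0$. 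Your method is more elementary and self-contained; the paper's is shorter because the local bookkeeping is absorbed into the cited Kida formula, and it yields the slightly stronger conclusion (which you note in passing) that $n$ is not a sum of two cubes in all of $L_\infty^{(p)}$.

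Two points in your write-up need tightening. First, your case split omits the primes $q\mid 6n$ of bad reduction; by hypothesis~(2) these lie outside $\Sigma$, hence are unramified in $L$, and since $E_n$ has \emph{additive} reduction there the component group has order at most $4$ and the formal group at $q\neq p$ is uniquely $p$-divisible, so $H^1(G_w,E_n(L_w))=0$ and the comparison still goes through---but this should be said. Second, your CM sketch for $E_n(L)_{\op{tors}}=0$ is not convincing as written: the assertion that the case $q\equiv\pm1\pmod p$ is ``excluded by the constraints on $\Sigma$'' does not follow from the hypotheses, which constrain only the ramified primes of $L$, not an arbitrary torsion prime $q$. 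The easy CM observation that $[\Q(E_n[p]):\Q]$ is prime to $p$ does give $E_n(L)[p]=0$, which is all you need for the inflation--restriction step; for the full torsion statement your parenthetical fallback to the classification of torsion growth is exactly what the paper uses and is the clean way to finish.
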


In fact, our method shows that $n$ cannot be a sum of two cubes in the cyclotomic $\Z_p$-extension of $L$. This result is proven by showing that the Iwasawa $\mu$- and $\lambda$-invariants are trivial for the Selmer group considered over this $\Z_p$-extension. The key ingredient is an analogue of \emph{Kida's formula}, due to Hachimori and Matsuno \cite{HachimoriMatsuno}, which studies the growth of such Iwasawa invariants in $p$-extensions. Given an elliptic curve $E_{/\Q}$ with $\op{End}_{\bar{\Q}}(E)=\op{End}_{\Q}(E)$, it follows from the results of Mazur and Rubin \cite{Mazurrubindiophantine} on \emph{diophantine stability} that for a positive density of primes, there will exist infinitely many $L/\Q$ for which $E(L)=E(\Q)$. For elliptic curves $E_n$, our result applies to all prime numbers $p>7$ that lie in the explicitly defined set $S$.

\subsection{Outlook and related work} In recent years, there has been growing momentum in the study of solutions to diophantine equations in cyclotomic towers of number fields, bridging seemingly disparate themes in number theory. For instance, Freitas, Kraus and Siksek \cite{FKS1, FKS2} studied the Fermat equation $x^\ell+y^\ell=z^\ell$ in cyclotomic towers over $\Q$. The author and Weston merged diophantine methods with Galois cohomological ideas from Iwasawa theory \cite{RayWeston} to prove special cases of Hilbert's tenth problem in anticyclotomic towers of imaginary quadratic fields.  

\subsection{Organization} Including the introduction the article consists of three sections.  In the second section,  we introduce notation and preliminaries on the Iwasawa theory of elliptic curves.  In the final section,  we prove the Theorems \ref{main thm} and \ref{aux thm}. We finish with an explicit example to illustrate these results. 
\subsection*{Acknowledgement} The author wishes to thank the referee for the helpful report.

\section{Iwasawa theory of elliptic curves}
\par Let $n$ be a cube-free natural number and let $E_n$ be the elliptic curve defined by the Weierstrass equation 
\[E_n: x^3+y^3=nz^3.\]
It is well known that $E_n(\Q)_{\op{tors}}=0$ and thus, $n$ can be represented by a sum of two rational cubes if and only if $\op{rank}E_n(\Q)>0$. The Weierstrass form of the curve $E_n$ is given by $y^2=x^3-432n^2$. It is easy to see that $E_n$ has good reduction at all primes $p\nmid 6n$. For the rest of this section, $E$ is an elliptic curve defined over $\Q$. A finite Galois extension $L/\Q$ is said to be a $p$-extension if $\#\op{Gal}(L/\Q)$ is a power of $p$.  A Galois extension $L/\Q$ which is an increasing union of $p$-extensions is referred to as a pro-$p$ extension of $\Q$.  For a natural number $m$,  let $E[m]$ denote the $m$-torsion subgroup of $E(\bar{\Q})$ and 
\[\rho_{E,m}:\op{Gal}(\bar{\Q}/\Q)\rightarrow \op{GL}_2(\Z/m\Z)\]the associated Galois representation.  The field $\Q(E[m])$ is the fixed field of the kernel of $\rho_{E,m}$. We note that $\Q(E[m])$ is a Galois extension of $\Q$ whose Galois group can be identified with the image of $\rho_{E,m}$.  Given a number field $F$,  we set $F(E[m])$ to denote the composite of $F$ with $\Q(E[m])$.

\begin{lemma}\label{lemma 2.1}
    Let $n$ be a cube-free natural number and $L$ be a pro-$p$ extension of $\Q$. 
    Assume that $p\nmid 6n$ and that $p$ is totally ramified in $L$. Then, $n$ is a sum of two cubes $x, y\in L$ if and only if $\op{rank} E_n(L)>0$.
\end{lemma}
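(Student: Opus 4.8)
The plan is to reduce the claimed equivalence to the single assertion that $E_n(L)_{\op{tors}}=0$, and then to prove the latter. First I would observe that, since $p\geq 5$ is odd, the pro-$p$ extension $L/\Q$ contains no quadratic subfield; in particular a primitive cube root of unity $\zeta_3$ does not lie in $L$. The points of the plane cubic $E_n$ on the line at infinity $Z=0$ are $[1:-1:0]$ together with $[{-\zeta_3}:1:0]$ and $[{-\zeta_3^2}:1:0]$, so the only one of them defined over $L$ is the origin $O=[1:-1:0]$. Dehomogenising, a point of $E_n(L)$ yields a representation $n=x^3+y^3$ with $x,y\in L$ exactly when it is different from $O$; hence $n$ is a sum of two cubes in $L$ if and only if $E_n(L)\neq\{O\}$. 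Granting $E_n(L)_{\op{tors}}=0$, the condition $E_n(L)\neq\{O\}$ is equivalent to $\op{rank}E_n(L)>0$, which is the statement of the lemma. (The implication ``positive rank $\Rightarrow$ representation'' needs only the remark about points at infinity; torsion-freeness is used for the reverse implication.)

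It remains to prove $E_n(L)[\ell]=0$ for every prime $\ell$. I would argue by contradiction: set $\rho=\rho_{E_n,\ell}$, let $N=\rho(\op{Gal}(\bar{\Q}/L))$ and $G=\rho(\op{Gal}(\bar{\Q}/\Q))$, so that $N\trianglelefteq G$ and $G/N$ is a finite $p$-group, being a quotient of $\op{Gal}(L/\Q)$. The fixed space $(E_n[\ell])^N$ equals $E_n(L)[\ell]$, is assumed nonzero, and is $G$-stable by normality of $N$; hence it is either all of $E_n[\ell]$ or a line. If it is all of $E_n[\ell]$, then $\Q(E_n[\ell])\subseteq L$, so $\op{Gal}(\Q(E_n[\ell])/\Q)$ is a $p$-group. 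This is impossible: for $\ell$ odd the determinant of $\rho$ (the mod-$\ell$ cyclotomic character) maps it onto $\F_\ell^\times$, a group of even order $\ell-1\geq 2$, which cannot be a power of the odd prime $p$; for $\ell=2$ it embeds in $\op{GL}_2(\F_2)\cong S_3$, so it must be trivial, forcing $E_n[2]\subseteq E_n(\Q)_{\op{tors}}=0$, a contradiction.

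In the remaining case $(E_n[\ell])^N$ is a line $\langle v_0\rangle$, on which $G$ acts through a character $\phi\colon\op{Gal}(\bar{\Q}/\Q)\to\F_\ell^\times$. Since $N$ fixes $v_0$, the character $\phi$ is trivial on $\op{Gal}(\bar{\Q}/L)$, hence factors through the pro-$p$ group $\op{Gal}(L/\Q)$; its image is therefore a $p$-subgroup of the cyclic group $\F_\ell^\times$ of order $\ell-1$, so the order of $\phi$ is a power of $p$. If $\phi$ is trivial, then $v_0\in E_n(\Q)[\ell]=0$. If $\ell=p$, then $\phi$ has order dividing both $p-1$ and a power of $p$, hence is trivial again. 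Thus we may assume $\ell\neq p$ and that $\phi$ has order $p^a$ with $a\geq 1$; then the fixed field of $\ker\phi$ is a nontrivial cyclic $p$-extension of $\Q$ contained in $L$, and since $p$ is totally ramified in $L$ it is totally ramified there as well, so $\phi$ is ramified at $p$. On the other hand $p\nmid 6n$, so $E_n$ has good reduction at $p$, and by the criterion of N\'eron--Ogg--Shafarevich $E_n[\ell]$ is unramified at $p$; hence so is its $G$-subquotient $\phi$. This contradiction completes the proof that $E_n(L)_{\op{tors}}=0$, and with it the lemma.

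The one genuinely delicate point is this last case with $\ell\equiv 1\pmod p$: this is exactly where both hypotheses of the lemma are needed, for without good reduction at $p$ one could not appeal to N\'eron--Ogg--Shafarevich to conclude that $E_n[\ell]$ is unramified at $p$, and without total ramification at $p$ the character $\phi$ would not be forced to be ramified at $p$. The point-at-infinity reduction, the two-dimensional case, and the case $\ell=p$ are all routine.
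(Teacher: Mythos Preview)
Your proof is correct. Both you and the paper reduce to showing $E_n(L)_{\op{tors}}=0$, and both ultimately rest on the same two inputs: the pro-$p$ structure of $\op{Gal}(L/\Q)$ and the fact that $p$ is unramified in the $\ell$-torsion field for $\ell\neq p$ (good reduction via N\'eron--Ogg--Shafarevich) while totally ramified in $L$. The execution, however, differs.

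For $q\neq p$ the paper argues in one stroke: since $p$ is unramified in $\Q(E_n[q])$ and totally ramified in $L$, one has $L\cap\Q(E_n[q])=\Q$, whence $\op{Gal}(L(E_n[q])/L)\simeq\op{Gal}(\Q(E_n[q])/\Q)$ and $E_n(L)[q]=E_n(\Q)[q]=0$. For $q=p$ the paper just invokes the standard fact that a $p$-group acting on a nonzero $\F_p$-vector space has a nonzero fixed vector. Your argument instead performs a structural case split on whether the fixed subspace $(E_n[\ell])^N$ is all of $E_n[\ell]$ or a line, handling each case by tracking the determinant or a one-dimensional subcharacter and only at the end invoking unramifiedness at $p$. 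This works, but it is more labor for the same conclusion; the paper's linear-disjointness observation bypasses the dimension dichotomy entirely. On the other hand, you are more explicit than the paper about why torsion-freeness actually yields the stated equivalence: you identify the $L$-rational points at infinity and note that $\zeta_3\notin L$, a step the paper leaves implicit.
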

\begin{proof}
    Let $E:=E_n$, we show that $E(L)_{\op{tors}}=0$. Since $L$ is a union of finite $p$-extensions, we may assume without loss of generality that $L/\Q$ is a finite $p$-extension. It suffices to show that for all prime numbers $q$, we have that $E(L)[q]=0$. First consider the case when $q=p$. Since $L/\Q$ is a $p$-extension, it follows from \cite[Proposition 1.6.12]{NSW} that
    \[E(\Q)[p]=0\Rightarrow E(L)[p]=0.\]
    Next, we consider the case when $q\neq p$, then, the prime $p$ is unramified in $\Q(E[q])$. On the other hand, since $p$ is totally ramified in $L$, it follows that $L\cap \Q(E[q])=\Q$. As a consequence, we find that $\op{Gal}(L(E[q])/L)\simeq \op{Gal}(\Q(E[q])/\Q)$, and therefore, 
    \[E(\Q)[q]=0\Rightarrow E(L)[q]=0.\]
\end{proof}

Let $p$ be an odd prime number and $\Z_p$ denote the ring of $p$-adic integers. The \emph{cyclotomic $\Z_p$-extension} $\Q_\infty^{(p)}/\Q$ is the unique $\Z_p$-extension contained in $\Q(\mu_{p^\infty})$. Let $\Q_n^{(p)}$ be the extension of $\Q$ that is contained in $\Q_\infty^{(p)}$ such that $[\Q_n^{(p)}:\Q]=p^n$. We refer to $\Q_n^{(p)}$ as the \emph{$n$-th layer}. Let $K$ be a field of characteristic $0$ and $E$ be an elliptic curve over $\Q$. We set $\bar{K}$ to be a choice of algebraic closure of $K$ and $\op{G}_K$ denote the Galois group $\op{Gal}(\bar{K}/K)$. When it is clear from the context, set $E:=E(\bar{K})$ and $E[p^n]:=E(\bar{K})[p^n]$. Consider the exact sequence of $\op{G}_K$-modules
\[0\rightarrow E[p^n]\rightarrow E\xrightarrow{\times p^n} E\rightarrow 0, \]
associated to which is the exact sequence
\[0\rightarrow E(K)\otimes \left(\Z/p^n\Z\right)\rightarrow H^1(\op{G}_K, E[p^n])\rightarrow H^1(\op{G}_K, E)[p^n]\rightarrow 0.\] Taking direct limits as $n\rightarrow \infty$, obtain a short exact sequence 
\[0\rightarrow E(K)\otimes \left(\Q_p/\Z_p\right)\rightarrow H^1(\op{G}_K, E[p^\infty])\rightarrow H^1(\op{G}_K, E)[p^\infty]\rightarrow 0. \]
Let $F$ be a number field. For each non-archimedian prime $v$ of $F$, let $F_v$ denote the completion of $F$ at $v$. Choose an embedding $\iota_v:\bar{F}\hookrightarrow \bar{F}_v$ for each prime $v$, and \[\iota_v^* : \op{G}_{F_v}\hookrightarrow \op{G}_F\] the associated embedding. This embedding induces the restriction map 
\[\op{res}_v: H^1(\op{G}_F, E[p^\infty])\rightarrow H^1(\op{G}_{F_v}, E[p^\infty]).\]  The $p$-primary Selmer group over $F$ is defined as follows
\[\op{Sel}_{p^\infty}(E/F):=\left\{c\in H^1(\op{G}_\Q, E[p^\infty])\mid \op{res}_v(c)\in E(F_v)\otimes \Q_p/\Z_p\text{ for all primes }v \right\}.\] 
For a number field $F$, denote by $\Sh(E/F)$ the \emph{Tate-Shafarevich group}
\[\Sh(E/F):=\left\{H^1(\bar{F}/F, E)\rightarrow \prod_{v} H^1(\bar{F}_v/F_v, E)\right\}.\]
The Selmer group is an extension of the Tate-Shafarevich group, there is a short exact sequence 
\begin{equation}\label{selmer ses}0\rightarrow E(F)\otimes \Q_p/\Z_p\rightarrow \op{Sel}_{p^{\infty}}(E/F)\rightarrow \Sh(E/F)[p^{\infty}]\rightarrow 0.\end{equation}When $F/\Q$ is Galois, the Selmer group is a module over $\Z_p[\op{Gal}(F/\Q)]$. The Selmer group over $\Q_\infty^{(p)}$ is defined to be the following direct limit
\[\op{Sel}_{p^\infty}(E/\Q_\infty^{(p)}):=\lim_{n\rightarrow \infty}\op{Sel}_{p^\infty}(E/\Q_n^{(p)}). \]
Let $\Gamma$ denote the Galois group $\op{Gal}(\Q_\infty^{(p)}/\Q)$ and let $\gamma$ be a topological generator of $\Gamma$. The Iwasawa algebra $\Lambda$ is the completed group algebra
\[\Lambda:=\varprojlim_n \Z_p[\Gamma/\Gamma^{p^n}]. \] Set $T:=\gamma-1$ and identify $\Lambda$ with the formal power series ring $\Z_p\llbracket T \rrbracket $. The dual Selmer group \[\op{Sel}_{p^\infty}(E/\Q_\infty^{(p)})^\vee:=\op{Hom}_{\op{cnts}}\left(\op{Sel}_{p^\infty}(E/\Q_\infty^{(p)}), \Q_p/\Z_p\right)\] is the Pontryagin dual of $\op{Sel}_{p^\infty}(E/\Q_\infty^{(p)})$. It follows from results of Kato \cite{kato2004p} that $\op{Sel}_{p^\infty}(E/\Q_\infty^{(p)})^\vee$ is finitely generated and torsion as a module over $\Lambda$. 

\par Set $M$ to denote the dual Selmer group of $E$ over $\Q_\infty^{(p)}$. A polynomial $f(T)\in \Z_p[T]$ is \emph{distinguished} if it is a monic polynomial whose nonleading coefficients are divisible by $p$. By the structure theory of finitely generated and torsion modules over $\Lambda$, there is a homomorphism 
\[M\longrightarrow \left(\bigoplus_{i=1}^s \frac{\Lambda}{(p^{\mu_i})}\right)\oplus \left( \bigoplus_{i=1}^t \frac{\Lambda}{(f_j(T))} \right),\] whose kernel and cokernel are both finite (see \cite[Chapter 13]{washington1997introduction}). In the above decomposition, $f_j(T)$ is a distinguished polynomial. The \emph{characteristic element} is defined as follows
\[f_M(T):=\prod_i p^{\mu_i} \times \prod_j f_j(T).\] This element decomposes into a product 
\[f_M(T)=p^\mu\times g(T),\] where $\mu\in \Z_{\geq 0}$ and $g(T)$ is a distinguished polynomial. The $\mu$-invariant is the quantity $\mu=\sum_i \mu_i$ in the above decomposition. The $\lambda$-invariant is the degree of $g(T)$. The $\mu$ and $\lambda$-invariants of $\op{Sel}_{p^\infty}(E/\Q_\infty^{(p)})^\vee$ over $\Q_\infty^{(p)}$ are denoted $\mu_p(E)$ and $\lambda_p(E)$ respectively. Denote by $j_E$ the $j$-invariant of $E$.  
\begin{theorem}\label{thm 2.2}
    Let $p\geq 5$ be a prime number at which $E$ has good ordinary reduction such that 
    \begin{enumerate}
        \item $p$ does not divide $|\widetilde{E}(\F_p)|$, where $\widetilde{E}$ is the reduction of $E$ modulo $p$, 
        \item if $E$ has split multiplicative reduction at $\ell\neq p$, then, $p\nmid \op{ord}_\ell(j_E)$.
    \end{enumerate}
    Then there is a natural map
    \[\op{Sel}_{p^\infty}(E/\Q)\rightarrow \op{Sel}_{p^\infty} (E/\Q_\infty^{(p)})^\Gamma \] which is surjective. In particular, if $\op{Sel}_{p^\infty}(E/\Q)=0$, then, $\op{Sel}_{p^\infty}(E/\Q_\infty^{(p)})=0$.
\end{theorem}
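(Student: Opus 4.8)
The plan is to carry out Mazur's control theorem argument in the refined form due to Greenberg, in which hypotheses (1) and (2) are precisely what is needed to make the relevant local ``error terms'' vanish. Write $F_\infty:=\Q_\infty^{(p)}$ and fix a finite set $\Sigma$ of places of $\Q$ containing $p$, $\infty$, and every prime of bad reduction of $E$; since Selmer classes are unramified outside $\Sigma$, one works throughout with $\Sigma$-ramified cohomology $H^1(\op{G}_{\Q,\Sigma},E[p^\infty])$ and its analogues over the layers of $F_\infty$, so that the local conditions involve only the finitely many $v\in\Sigma$. For such $v$ put $\mathcal{J}_v(\Q):=H^1(\Q_v,E[p^\infty])/\op{Im}\big(E(\Q_v)\otimes\Q_p/\Z_p\big)$, and similarly $\mathcal{J}_w(F_\infty)$ over $F_\infty$, so that $\op{Sel}_{p^\infty}(E/\Q)=\ker\big(H^1(\op{G}_{\Q,\Sigma},E[p^\infty])\to\bigoplus_{v\in\Sigma}\mathcal{J}_v(\Q)\big)$ and likewise over $F_\infty$ in the limit over layers. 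First I would set up the fundamental commutative diagram with exact rows
\[
\begin{array}{ccccccc}
0 & \to & \op{Sel}_{p^\infty}(E/\Q) & \to & H^1(\op{G}_{\Q,\Sigma},E[p^\infty]) & \to & \bigoplus_{v\in\Sigma}\mathcal{J}_v(\Q)\\
 & & \downarrow s & & \downarrow h & & \downarrow g\\
0 & \to & \op{Sel}_{p^\infty}(E/F_\infty)^\Gamma & \to & H^1(\op{G}_{F_\infty,\Sigma},E[p^\infty])^\Gamma & \to & \big(\bigoplus_{w}\mathcal{J}_w(F_\infty)\big)^\Gamma
\end{array}
\]
(the last direct sum over places $w$ of $F_\infty$ above $\Sigma$), where $h$ and $g$ are restriction maps and $s$ is the induced map on Selmer groups, well defined because $\Gamma$-invariants is left exact. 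Replacing the upper-right term by the image of the middle map makes the top row short exact, and the snake lemma then yields an exact sequence $\ker g\xrightarrow{\ \delta\ }\op{coker}(s)\to\op{coker}(h)$. Since $\Gamma\cong\Z_p$ has $p$-cohomological dimension $1$, inflation--restriction shows $h$ is surjective, so $\op{coker}(h)=0$ and $\op{coker}(s)$ is a quotient of $\ker g$. Writing $g=\bigoplus_v g_v$ and using Shapiro's lemma to identify $\big(\bigoplus_{w\mid v}\mathcal{J}_w(F_\infty)\big)^\Gamma$ with $\mathcal{J}_{w_0}(F_\infty)^{\Gamma_{w_0}}$ for a fixed $w_0\mid v$, it therefore suffices to prove that $\ker g_v=0$ for every $v\in\Sigma$.

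The rest is a place-by-place local computation. For $v=p$: since $E$ has good \emph{ordinary} reduction there is an exact sequence $0\to\widehat E[p^\infty]\to E[p^\infty]\to D\to 0$ of $\op{G}_{\Q_p}$-modules with $D$ unramified, and (following Coates--Greenberg) the local condition cutting out $\mathcal{J}_p$ is described through $H^1(\Q_p,D)$; as $p$ is totally ramified in $F_\infty$, the decomposition group at $p$ is all of $\Gamma$ and the residual Frobenius is unchanged, so $\ker g_p$ is an inflation--restriction term built from $D^{\op{Frob}_p=1}$ and $\widehat E[p^\infty](F_{\infty,p})$. Hypothesis (1), i.e.\ $p\nmid|\widetilde E(\F_p)|$, is equivalent to $\alpha_p\not\equiv 1\pmod p$ for the unit root $\alpha_p$ of $X^2-a_pX+p$, hence to $D^{\op{Frob}_p=1}=0$; together with $\widehat E[p^\infty](F_{\infty,p})=0$ (since $F_{\infty,p}$, being linearly disjoint from $\Q_p(\mu_p)$, contains no $p$-th roots of unity), this forces $\ker g_p=0$. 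For $v=\ell\in\Sigma$ with $\ell\neq p,\infty$: here $\ell$ is unramified and infinitely split in $F_\infty$, $F_{\infty,w_0}/\Q_\ell$ is the unramified $\Z_p$-extension, $\mathcal{J}_\ell(\Q)\cong H^1(\Q_\ell,E)[p^\infty]$, and $\ker g_\ell$ is the $p$-part of $H^1\big(\op{Gal}(F_{\infty,w_0}/\Q_\ell),E(F_{\infty,w_0})\big)$, the cohomology of a procyclic pro-$p$ group acting on the points of $E$ along the unramified tower. In the split multiplicative case the Tate-curve parametrization shows this group is governed by $v_\ell(q_E)=-\op{ord}_\ell(j_E)$, and hypothesis (2), $p\nmid\op{ord}_\ell(j_E)$, makes it vanish; in the non-split multiplicative or additive cases the component group has order prime to $p$ (here $p\geq 5$ is used) and the identity/toric part acquires no new $p$-torsion along an unramified tower, so again $\ker g_\ell=0$. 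The archimedean place contributes nothing since $p$ is odd. (A prime of good reduction different from $p$ does not lie in $\Sigma$ here, but in a more general statement it would give $\mathcal{J}_v(\Q)=0$ and hence $\ker g_v=0$ at once.)

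I expect the main obstacle to be this last local computation at the bad primes $\ell\neq p$: making precise the identification of $\ker g_\ell$ with the $p$-part of the cohomology of $E(F_{\infty,w_0})$ along the unramified tower and checking that hypothesis (2) is exactly the condition that kills the split-multiplicative contribution --- the same mechanism that underlies the Hachimori--Matsuno analogue of Kida's formula used later in the paper. Granting $\ker g_v=0$ for all $v\in\Sigma$, we obtain $\op{coker}(s)=0$, so $s$ is surjective. For the final assertion, suppose $\op{Sel}_{p^\infty}(E/\Q)=0$; then surjectivity of $s$ gives $\op{Sel}_{p^\infty}(E/F_\infty)^\Gamma=0$, hence $M/TM=0$ for $M:=\op{Sel}_{p^\infty}(E/F_\infty)^\vee$, which is a finitely generated $\Lambda$-module; since $T$ lies in the maximal ideal of the local ring $\Lambda$, Nakayama's lemma forces $M=0$, i.e.\ $\op{Sel}_{p^\infty}(E/F_\infty)=0$.
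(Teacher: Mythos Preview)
Your proposal is correct and is precisely Greenberg's control-theorem argument; the paper itself gives no independent proof but simply cites Greenberg's Proposition~3.8. One minor slip: primes $\ell\neq p$ are \emph{finitely} (not infinitely) decomposed in $\Q_\infty^{(p)}$, though you then correctly identify $F_{\infty,w_0}/\Q_\ell$ as the unramified $\Z_p$-extension, so the local analysis is unaffected.
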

\begin{proof}
    The above result is due to Greenberg, cf \cite[Proposition 3.8]{greenbergIwasawa97}.
\end{proof}

\par Let $\Pi$ be the set of all primes $p\geq 5$ at which $E$ has good ordinary reduction. For a subset $S\subset \Pi$ and let $S(x):=\{p\in S\mid p\leq x\}$. The natural density of $S$ is the limit 
\[\mathfrak{d}(S):=\lim_{x\rightarrow \infty} \frac{\# S(x)}{\# \Pi(x)}.\]

\begin{corollary}
    Let $E$ be an elliptic curve defined over $\Q$ such that $E(\Q)$ and $\Sh(E/\Q)$ are finite. Let $S_E$ be the set of primes $p\geq 5$ at which $E$ has good ordinary reduction and $\op{Sel}_{p^\infty}(E/\Q_\infty^{(p)})=0$. Then, we find that $\mathfrak{d}(S_E)=1$. 
\end{corollary}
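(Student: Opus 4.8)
The plan is to combine Theorem \ref{thm 2.2} with the standard fact that the $p$-primary Selmer group $\op{Sel}_{p^\infty}(E/\Q)$ vanishes for all but finitely many $p$, given that $E(\Q)$ and $\Sh(E/\Q)$ are finite. Concretely, first I would use the short exact sequence \eqref{selmer ses} over $F=\Q$: since $E(\Q)$ is finite, $E(\Q)\otimes\Q_p/\Z_p=0$ for every $p$, so $\op{Sel}_{p^\infty}(E/\Q)\simeq\Sh(E/\Q)[p^\infty]$. Because $\Sh(E/\Q)$ is a finite abelian group, $\Sh(E/\Q)[p^\infty]=0$ for all primes $p$ not dividing $\#\Sh(E/\Q)$; hence $\op{Sel}_{p^\infty}(E/\Q)=0$ for all but finitely many $p$. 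By Theorem \ref{thm 2.2}, whenever $p\geq 5$ is a prime of good ordinary reduction satisfying conditions (1) and (2) of that theorem and $\op{Sel}_{p^\infty}(E/\Q)=0$, we get $\op{Sel}_{p^\infty}(E/\Q_\infty^{(p)})=0$, i.e. $p\in S_E$.

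So it remains to check that the set of primes $p\in\Pi$ failing conditions (1) or (2) of Theorem \ref{thm 2.2}, together with the finitely many primes dividing $\#\Sh(E/\Q)$, is a density-zero subset of $\Pi$. The primes dividing $\#\Sh(E/\Q)$ are finite in number, hence density zero. For condition (2): only finitely many primes $\ell$ are primes of split multiplicative reduction for $E$, and for each such $\ell$, only the finitely many $p$ dividing $\op{ord}_\ell(j_E)$ are excluded; so condition (2) excludes only finitely many $p$, again density zero. The only genuinely nontrivial point is condition (1), namely that $p\nmid|\widetilde{E}(\F_p)|$; equivalently, writing $a_p=p+1-|\widetilde{E}(\F_p)|$, the condition $p\mid|\widetilde{E}(\F_p)|$ means $a_p\equiv p+1\equiv 1\pmod p$. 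By the Hasse bound $|a_p|\leq 2\sqrt p$, for $p\geq 5$ the only way to have $a_p\equiv 1\pmod p$ is $a_p=1$. Thus condition (1) fails only at primes $p$ with $a_p=1$, and one must show these have density zero inside $\Pi$.

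The main obstacle is therefore this last density statement: the set of primes $p$ with $a_p(E)=1$ has density zero. I would handle this via standard results on the distribution of $a_p$. In the non-CM case this follows from the Sato--Tate law (or even the weaker fact, due to Serre, that $\{p: a_p=t\}$ has density zero for any fixed integer $t$, proved using the $\ell$-adic representations and Chebotarev). In the CM case — which is exactly the case relevant to the curves $E_n$ of this paper, since $y^2=x^3-432n^2$ has CM by $\Z[\zeta_3]$ — the primes of good ordinary reduction are precisely those split in the CM field, and for such $p$ one has $a_p\ne 0$ with $a_p$ determined by the representation $p=\pi\bar\pi$; the set with $a_p=1$ corresponds to $\pi$ lying in a fixed finite set of residue classes / having bounded norm, which is a density-zero (indeed finite, up to the Hasse bound) condition. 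In either case $\{p\in\Pi: a_p=1\}$ has natural density $0$ in $\Pi$, which completes the argument that $\mathfrak d(S_E)=1$. (In fact, for the application in this paper one could even restrict to CM curves throughout, where the bound is cleanest.)
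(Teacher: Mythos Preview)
Your proof is correct and follows essentially the same route as the paper: reduce to Theorem \ref{thm 2.2} and verify that its hypotheses, together with the vanishing of $\op{Sel}_{p^\infty}(E/\Q)$, fail only on a density-zero subset of $\Pi$. The paper is terser, simply citing \cite{vkmurty} for the fact that $p\nmid|\widetilde{E}(\F_p)|$ holds for a set of primes of density $1$, whereas you spell out the reduction to $a_p=1$ and appeal to Serre/Sato--Tate type results (one tiny slip: at $p=5$ the Hasse bound also allows $a_p=-4\equiv 1\pmod 5$, but a single prime is of course irrelevant to the density statement).
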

\begin{proof}
    It is known that $p\nmid |\widetilde{E}(\F_p)|$ for a density $1$ set of primes $p$, cf. \cite{vkmurty}. The result then follows from Theorem \ref{thm 2.2}.
\end{proof}

\section{Proof of the main results}

\par In this section, we prove our main result. Throughout this section, $n$ is a cube-free natural natural number and let $E_n$ be the elliptic curve defined by the Weierstrass equation 
\[E_n: x^3+y^3=nz^3.\]

\begin{lemma}\label{lemma3.1}
    Let $p\nmid 6n$ be an odd prime number at which $E_n$ has good ordinary reduction and $\Q_\infty^{(p)}$ be the cyclotomic $\Z_p$-extension of $\Q$. The following conditions are equivalent
    \begin{enumerate}
        \item $n$ is not a sum of two cubes in $\Q_\infty^{(p)}$. 
        \item $\op{rank} E_n(\Q_\infty^{(p)})=0$. 
    \end{enumerate}
\end{lemma}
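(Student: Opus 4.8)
The plan is to reduce the statement immediately to Lemma~\ref{lemma 2.1}, applied with $L=\Q_\infty^{(p)}$. The first step is to check that $\Q_\infty^{(p)}$ satisfies the hypotheses of that lemma. The condition $p\nmid 6n$ is part of the hypothesis of Lemma~\ref{lemma3.1}. Next, $\Q_\infty^{(p)}$ is by construction the increasing union of its layers $\Q_n^{(p)}$, and $[\Q_n^{(p)}:\Q]=p^n$, so each layer is a finite $p$-extension of $\Q$ and hence $\Q_\infty^{(p)}$ is a pro-$p$ extension of $\Q$. Finally, one invokes the classical fact (see \cite[Ch.~13]{washington1997introduction}) that $p$ is the unique prime ramifying in $\Q_\infty^{(p)}/\Q$ and that it is totally ramified there; equivalently, $p$ is totally ramified in each $\Q_n^{(p)}$, which follows because $\Q_n^{(p)}$ is the degree-$p^n$ subfield of $\Q(\mu_{p^{n+1}})$, in which $p$ is totally ramified.

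Having verified the hypotheses, Lemma~\ref{lemma 2.1} applies and tells us that $n$ is a sum of two cubes $x,y\in\Q_\infty^{(p)}$ if and only if $\op{rank} E_n(\Q_\infty^{(p)})>0$. Since $\op{rank} E_n(\Q_\infty^{(p)})$ is a nonnegative integer, negating both sides of this biconditional yields exactly the asserted equivalence: $n$ is \emph{not} a sum of two cubes in $\Q_\infty^{(p)}$ if and only if $\op{rank} E_n(\Q_\infty^{(p)})=0$.

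I do not expect a genuine obstacle, since all the substantive work is contained in Lemma~\ref{lemma 2.1}, whose key input — via \cite[Proposition 1.6.12]{NSW} for the $p$-torsion and the criterion of N\'eron--Ogg--Shafarevich for the prime-to-$p$ torsion — is the absence of $\Q_\infty^{(p)}$-rational torsion on $E_n$. It is worth remarking that the ``good ordinary reduction'' hypothesis plays no role in this lemma: good reduction at $p$, which is already implied by $p\nmid 6n$, is all that the argument needs. The ordinariness is stated here because it is the hypothesis required by the Iwasawa-theoretic results used in the remainder of the section.
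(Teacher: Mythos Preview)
Your proposal is correct and follows exactly the paper's approach: the paper's proof consists of the single sentence ``The result follows from Lemma~\ref{lemma 2.1},'' and you have simply spelled out the verification of that lemma's hypotheses (pro-$p$, $p$ totally ramified, $p\nmid 6n$). Your remark that the ordinariness hypothesis is superfluous for this particular lemma is also accurate.
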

\begin{proof}
    The result follows from Lemma \ref{lemma 2.1}.
\end{proof}

\begin{proof}[Proof of Theorem \ref{main thm}]
    The elliptic curve $E:=E_n$ is a cubic twist of $E_1$ and thus has complex multiplication,  and in this setting, the set of primes $p$ at which $E$ has good ordinary reduction has density $\frac{1}{2}$.  By assumption,  $n$ is not a sum of two rational cubes,  hence,  $E(\Q)=0$.  After a change of coordinates, $E_n$ is given by the equation $y^2=x^3-432 n^2$. Thus in particular, there are no primes at which $E$ has multiplicative reduction. Let $p\in S$,  then,  since $E(\Q)=0$ and $\Sh(E/\Q)[p]=0$, it follows from \eqref{selmer ses} that $\op{Sel}_{p^\infty}(E/\Q)=0$.  Theorem \ref{thm 2.2} then implies that $\op{Sel}_{p^\infty}(E/\Q_\infty^{(p)})=0$ for all primes $p\in S$. The short exact sequence 
    \[0\rightarrow E(\Q_\infty^{(p)})\otimes \Q_p/\Z_p\rightarrow \op{Sel}_{p^\infty}(E/\Q_\infty^{(p)})\rightarrow \Sh(E/\Q_\infty^{(p)})[p^\infty]\rightarrow 0\] implies that $E(\Q_\infty^{(p)})\otimes \Q_p/\Z_p=0$. This gives that the rank of $E(\Q_\infty^{(p)})$ is $0$.  It then follows from Lemma \ref{lemma3.1} that for all $p\in S$,  $E(\Q_\infty^{(p)})=0$,  i.e., $n$ cannot be represented as $x^3+y^3$ for $x,y\in \Q_\infty^{(p)}$. The density of primes $p$ at which $\widetilde{E}(\F_p)[p]\neq 0$ is $0$ (cf. \cite{vkmurty}). This is indeed weaker than what is predicted by the Lang-Trotter conjecture, a precise unconditional asymptotic is obtained in \emph{loc. cit.} It follows that the density of $S$ is equal to the density of primes $p$ at which $E$ has good ordinary reduction.  Therefore,  the density of $S$ is equal to $\frac{1}{2}$. 
\end{proof}

\begin{proof}[Proof of Theorem \ref{aux thm}]
    Setting $E:=E_n$, it suffices to show that $\op{rank} E(L)=0$ and $E(L)_{\op{tors}}=0$. Let $L_\infty^{(p)}:=L\cdot \Q_\infty^{(p)}$, i.e., the cyclotomic $\Z_p$-extension of $L$. We recall from the proof of Theorem \ref{main thm} that $\op{Sel}(E/\Q_\infty^{(p)})=0$. In particular, the $\mu$- and $\lambda$-invariants of $\op{Sel}(E/\Q_\infty^{(p)})$ are equal to $0$. It follows from our assumptions on $p$ and $n$ and the main result of \cite{HachimoriMatsuno} that the $\mu$- and $\lambda$-invariants of $\op{Sel}(E/L_\infty^{(p)})$ are both equal to $0$ (see \cite[Lemma 4.10]{RayASDS} and its proof for further details). This implies in particular that the rank of $E(L)$ is $0$ (cf. \cite[Theorem 1.9]{GreenbergIwasawatheory}).

    \par Since $n$ is not expressible as a sum of rational cubes, we have that $E(\Q)=0$, and in particular, $E(\Q)_{\op{tors}}=0$. It follows from \cite[Theorem 7.2]{GJandNajman} that since $p>7$, $E(L)_{\op{tors}}=E(\Q)_{\op{tors}}=0$. Therefore, $E(L)=0$ and hence, $n$ cannot be represented as $x^3+y^3$ for any $x,y\in L$.
\end{proof}

\subsection*{An example} Let us consider an explicit example. The number $n=3$ is not a sum of two rational cubes. This can be seen by considering the elliptic curve $E:=E_3: x^3+y^3=3$. In standard form, the equation of this curve is $E:y^2=x^3-3888$. The data provided to us on LMFDB (see \href{https://www.lmfdb.org/EllipticCurve/Q/243/b/1}{243b2}) confirms that this elliptic curve has trivial Mordell--Weil group over $\Q$. This elliptic curve has good ordinary reduction at $7$, and $\# \widetilde{E}(\F_7)=3$. In particular, $7$ does not divide $\# \widetilde{E}(\F_7)=3$. Moreover, the exact order of the Tate--Shafarevich group is $1$. Thus, we find that $7$ is contained in the set $S$ defined in Theorem \ref{main thm}. The Theorem thus asserts that $3$ is not a sum of cubes in the cyclotomic $\Z_7$-extension of $\Q$. Now let $L/\Q$ be the unique $\Z/7\Z$-extension of $\Q$ which is contained in $\Q(\mu_{29})$. The set of primes $\Sigma$ introduced in Theorem \ref{aux thm} is the single prime $\{29\}$. We find that 
\[\# \widetilde{E}(\F_{29})=29+1-a_{29}(E)=30.\] Therefore, we have that $\widetilde{E}(\F_{29})[7]=0$. Thus, the conditions of Theorem \ref{aux thm} are satisfied, and we conclude that $3$ is not representable as $x^3+y^3$ for $x,y\in L$.

\bibliographystyle{alpha}
\bibliography{references}
\end{document}